\documentclass[12pt, a4paper]{article}
\usepackage{amsmath, amsthm, amscd, amsfonts, amssymb, graphicx, color}
\usepackage[bookmarksnumbered,colorlinks, plainpages]{hyperref}

\setlength{\textwidth}{16 cm} \setlength{\textheight}{8.5in}
\setlength{\evensidemargin}{-0.2in}
\setlength{\oddsidemargin}{-0.2in}
\addtolength{\hoffset}{0.5 cm}
\newtheorem{theorem}{Theorem}[section]
\newtheorem{lemma}[theorem]{Lemma}
\newtheorem{proposition}[theorem]{Proposition}

\theoremstyle{definition}

\newtheorem{example}[theorem]{Example}

\theoremstyle{remark}

\numberwithin{equation}{section}
\makeatletter
\newcommand{\vast}{\bBigg@{3}}
\newcommand{\Vast}{\bBigg@{4}}
\makeatother

\begin{document}

\begin{center} \begin{Huge}\textbf{Strong Twins of Ordinary Star-Like Self-Contained Graphs}\end{Huge} \end{center}

\begin{center}\textbf{Mohammad Hadi Shekarriz\footnote{\small{Corresponding author\\ \indent \textit{E-mail addresses:} mh.shekarriz@stu.um.ac.ir and mirzavaziri@um.ac.ir\\
\indent \textit{2010 Mathematics Subject Classification:} 05C63, 05C60}} \textbf{and Madjid Mirzavaziri}}

Department of Pure Mathematics, Ferdowsi University of Mashhad,\\
P. O. Box 1159, Mashhad 91775, Iran. \end{center}

\begin{abstract}
A self-contained graph is an infinite graph which is isomorphic to one of its proper induced subgraphs. In this paper, ordinary star-like self-contained graphs are introduced and it is shown that every ordinary star-like self-contained graph has infinitely many strong twins or none.\\
\noindent \small{\textbf{Keywords:} self-contained graph, graph alternative conjecture, ordinary star-like self-contained graphs.}
\end{abstract}

\section{Introduction}
Self-contained graphs are infinite graphs which have isomorphic copies of themselves as proper \textit{induced} subgraphs. These graphs were studied in \cite{Shekarriz} and, in this paper, we continue studying them by finding a special kind of self-contained graphs for which a renowned conjecture of Bonato and Tardif \cite{Bonato1} comes true. 

Self-contained graphs have fascinated mathematicians since 2003 by the so-called \emph{``Graph alternative conjecture''}, which has its origin in \cite{Bonato1} where Bonato and Tardif studied \emph{twins} of infinite graphs under the phrase \emph{``mutually embeddable graphs''}; two non-isomorphic graphs $G$ and $H$ are called \textit{``(strong) twins''} if $G$ is isomorphic to a proper (induced) subgraph of $H$ and $H$ is also isomorphic to a proper (induced) subgraph of $G$. They asked a question that if $G$ and $H$ are twins, then do $G$ and $H$ belong to an infinite family of twins? Three years later, they extended their study of twins in \cite{Bonato2} where they noted that if an infinite graph has a strong twin, then it is isomorphic to one of its proper induced subgraphs, i.e., in our phrase, every graph that has a strong twin is also self-contained. They also conjectured that in case of trees, the question has a positive answer. In other word,  they conjectured that every infinite tree has either infinitely many tree-twins or none. They called it \emph{``the tree alternative conjecture''} and proved it for rayless trees \cite{Bonato2}.

In 2009, Tyomkyn proved that the tree alternative conjecture is true for all rooted trees and also conjectured that, with the exception of the ray, every locally finite tree that is isomorphic to a proper subgraph of itself has infinitely many tree-twins \cite{Tyomkyn}. In 2011, another progress made by Bonato et. al. in \cite{rayless}, where they proved that (i) a rayless graph has either infinitely many twins or none, and (ii) a connected rayless graph has either infinitely many connected twins or none.

To read this paper, we need some few definitions, notations and results we have presented in \cite{Shekarriz}. Meanwhile, few definitions of infinite graph theory is also needed, all of which can be found in Section 8 of \cite{Diestel}. Moreover, to simplify, we use the notation $\emptyset$ for the null graph, the unique graph that has no vertices. Furthermore, we use the notations $\subset$ and $\sim_G$ respectively for subgraph and adjacency relations in a graph $G$, and, $G \setminus H$ always stands for the induced subgraph $G[V(G) \setminus V(H)]$ where $H$, itself, is an induced subgraph of $G$.

For a self-contained graph $G$, a non-empty proper subgraph $H$ is a \emph{removable subgraph} if $G \setminus H \cong G$. Then we write $H \in \mathrm{Rem}(G)$ and by $\mathrm{Iso}_{G} (H)$ we mean the set of all isomorphisms $f:G\longrightarrow G \setminus H$ \cite{Shekarriz}. We may also need the following two propositions:

\begin{proposition}
\label{union-rem}
Let $G$ be a self-contained graph, $P \in \mathrm{Rem}(G)$ and $Q$ be an induced subgraph of $G \setminus P$. Then $Q \in \mathrm{Rem}(G \setminus P)$ if and only if $P \cup Q \in \mathrm{Rem}(G)$ \textnormal{\cite{Shekarriz}}.
\end{proposition}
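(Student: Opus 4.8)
The plan is to reduce the biconditional to a single identity of vertex sets and then transport isomorphisms across it. First I would record that, since $Q$ is an induced subgraph of $G \setminus P$, its vertex set satisfies $V(Q) \subseteq V(G) \setminus V(P)$, so that $V(P)$ and $V(Q)$ are disjoint. Consequently the two graphs appearing in the statement are literally the \emph{same} induced subgraph of $G$: both $(G \setminus P) \setminus Q$ and $G \setminus (P \cup Q)$ equal $G[V(G) \setminus (V(P) \cup V(Q))]$, because deleting $V(P)$ then $V(Q)$ deletes exactly the set $V(P) \cup V(Q)$, and an induced subgraph is determined by its vertex set. This identity is the structural heart of the argument; once it is in place the rest is a matter of chasing isomorphisms.

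Next I would invoke the hypothesis $P \in \mathrm{Rem}(G)$, which supplies an isomorphism $G \setminus P \cong G$; in particular $G \setminus P$ is itself self-contained, so the notation $\mathrm{Rem}(G \setminus P)$ is legitimate. For the forward direction, assume $Q \in \mathrm{Rem}(G \setminus P)$. Then $(G \setminus P) \setminus Q \cong G \setminus P$ by definition, and composing with $G \setminus P \cong G$ gives $(G \setminus P) \setminus Q \cong G$. Rewriting the left-hand side by the identity above yields $G \setminus (P \cup Q) \cong G$, which is exactly $P \cup Q \in \mathrm{Rem}(G)$. The converse is the same computation read backwards: from $G \setminus (P \cup Q) \cong G \cong G \setminus P$ together with the identity I would conclude $(G \setminus P) \setminus Q \cong G \setminus P$, i.e. $Q \in \mathrm{Rem}(G \setminus P)$.

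Finally I would check the side conditions built into the definition of a removable subgraph, namely non-emptiness and properness. Non-emptiness of $P \cup Q$ follows from that of $P$, and properness follows because $G \setminus (P \cup Q) \cong G$ is infinite and hence non-empty, forcing $V(P \cup Q) \subsetneq V(G)$; in the converse direction one reads off that $Q$ is a non-empty proper subgraph of $G \setminus P$ from the corresponding properties of $P \cup Q$ and the disjointness of $V(P)$ and $V(Q)$. I do not anticipate a genuine obstacle here: the only points demanding care are the bookkeeping of the vertex sets and the observation that $\mathrm{Rem}$ is defined only for self-contained graphs, so that the isomorphism $G \setminus P \cong G$ must be noted before one may speak of $\mathrm{Rem}(G \setminus P)$. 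The degenerate case $Q = \emptyset$ is excluded by the standing convention that removable subgraphs are non-empty.
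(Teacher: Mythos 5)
Your proof is correct and follows essentially the same route as the paper's source: the paper defers this proposition to \cite{Shekarriz}, and the argument is exactly your two ingredients, namely the vertex-set identity $(G \setminus P) \setminus Q = G \setminus (P \cup Q)$ together with composing the isomorphism $G \cong G \setminus P$ with the one witnessing removability. One small wobble: in the converse direction, non-emptiness of $Q$ does \emph{not} follow from non-emptiness of $P \cup Q$ (since $P$ alone guarantees the latter), so the case $Q = \emptyset$ genuinely must be excluded by reading the statement as concerning non-empty $Q$ --- as your final sentence in effect does --- rather than being derivable as you first suggest.
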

\begin{proposition}
\label{8} Let $G$ be a self-contained graph and $H \in \mathrm{Rem}(G)$. Then $G$ contains infinitely many vertex disjoint copies of $H$ \textnormal{\cite{Shekarriz}}. 
\end{proposition}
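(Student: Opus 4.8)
The plan is to prove Proposition~\ref{8} by combining Proposition~\ref{union-rem} with a straightforward induction that produces, at each stage, a fresh disjoint copy of $H$ inside a shrinking self-contained ``remainder'' graph. Let me think about how to set this up carefully.

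We are told $H \in \mathrm{Rem}(G)$, so by definition $G \setminus H \cong G$ and $H$ is a non-empty proper induced subgraph. I want infinitely many pairwise vertex-disjoint copies $H_1, H_2, H_3, \dots$ of $H$ inside $G$. Let me construct them one at a time.

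Here is my construction. Set $H_1 := H$; this is our first copy, and $G \setminus H_1 \cong G$ by hypothesis. Now fix an isomorphism $f_1 \colon G \longrightarrow G \setminus H_1$ (an element of $\mathrm{Iso}_G(H_1)$, which is non-empty precisely because $H_1 \in \mathrm{Rem}(G)$). The key idea is that $f_1$ transports the whole ``self-contained with removable $H$'' structure into the subgraph $G \setminus H_1$. In particular, let $H_2 := f_1(H_1)$. Since $f_1$ is an isomorphism onto $G \setminus H_1$, its image $H_2$ is an induced subgraph of $G \setminus H_1$, isomorphic to $H$, and consequently disjoint from $H_1$ because $H_2 \subseteq V(G \setminus H_1) = V(G) \setminus V(H_1)$.

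The inductive step is where I would lean on Proposition~\ref{union-rem}. I claim that $H_1 \cup H_2 \in \mathrm{Rem}(G)$, which will let me iterate. Indeed, because $f_1$ is an isomorphism $G \to G\setminus H_1$ carrying $H_1$ to $H_2$, and $H_1 \in \mathrm{Rem}(G)$, the image $H_2 = f_1(H_1)$ is removable from the image graph, i.e.\ $H_2 \in \mathrm{Rem}(G \setminus H_1)$. Then Proposition~\ref{union-rem}, applied with $P = H_1$ and $Q = H_2$, gives exactly $H_1 \cup H_2 \in \mathrm{Rem}(G)$. More generally, suppose I have already produced pairwise-disjoint copies $H_1, \dots, H_n$ of $H$ with $P_n := H_1 \cup \cdots \cup H_n \in \mathrm{Rem}(G)$, so $G \setminus P_n \cong G$. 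Applying the base construction to the self-contained graph $G \setminus P_n$ (which contains a removable copy of $H$, since it is isomorphic to $G$) yields a new copy $H_{n+1} \subseteq V(G) \setminus V(P_n)$, disjoint from all previous ones, with $H_{n+1} \in \mathrm{Rem}(G \setminus P_n)$. A second application of Proposition~\ref{union-rem}, now with $P = P_n$ and $Q = H_{n+1}$, upgrades this to $P_{n+1} = P_n \cup H_{n+1} \in \mathrm{Rem}(G)$, completing the induction. The copies $H_1, H_2, \dots$ so obtained are pairwise disjoint by construction, giving infinitely many vertex-disjoint copies of $H$.

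The step I expect to require the most care is verifying that the copy of $H$ sitting inside $G \setminus P_n$ is genuinely removable \emph{from $G \setminus P_n$} and not merely an abstract isomorphic copy; this is what makes Proposition~\ref{union-rem} applicable and is the hinge of the whole argument. Concretely, the point is to transport removability along the isomorphism $G \cong G \setminus P_n$: if $\varphi \colon G \to G \setminus P_n$ is an isomorphism and $H \in \mathrm{Rem}(G)$, then $\varphi(H) \in \mathrm{Rem}(G \setminus P_n)$, because $\varphi$ restricts to an isomorphism $G \setminus H \to (G \setminus P_n) \setminus \varphi(H)$ and $G \setminus H \cong G \cong G \setminus P_n$. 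Once this transport-of-removability observation is stated cleanly, the rest is a routine induction, so I would isolate it as the first lemma-like step and then let Proposition~\ref{union-rem} do the bookkeeping.
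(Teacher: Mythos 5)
Your proof is correct: the transport-of-removability observation (if $\varphi\colon G \to G'$ is an isomorphism and $H \in \mathrm{Rem}(G)$, then $\varphi(H) \in \mathrm{Rem}(G')$) is sound, the disjointness bookkeeping works, and every application of Proposition~\ref{union-rem} satisfies its hypotheses, so the induction goes through. Note, however, that this paper never actually proves Proposition~\ref{8}; it is imported from \cite{Shekarriz}, so the relevant comparison is with the standard argument, which is shorter than yours: fix a single $f \in \mathrm{Iso}_{G}(H)$ once and for all and take the iterates $H, f(H), f^{2}(H), \ldots$. Since $f$ maps $G$ isomorphically onto the \emph{induced} subgraph $G \setminus H$, the map $f$ preserves adjacency and non-adjacency as a self-map of $G$, one gets $f^{n}(G) = G \setminus \big( H \cup f(H) \cup \cdots \cup f^{n-1}(H) \big)$, and hence $f^{n}(H)$ is automatically an induced copy of $H$ disjoint from all earlier iterates --- no appeal to Proposition~\ref{union-rem} is needed at all. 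That iteration viewpoint is also the machinery this paper relies on downstream (the families $\{H_{0}, H_{1}, H_{2}, \ldots\}$ of Lemma~\ref{star-removable}, and $\{P, f(P), f^{2}(P), \ldots\}$ and the graphs $G_{i} = G \setminus \bigcup_{j=1}^{i} f^{j-1}(P)$ around Lemma~\ref{star-twins}), so it is the version worth internalizing. What your route buys in exchange: it makes explicit at every stage that the union $H_{1} \cup \cdots \cup H_{n}$ of the copies found so far lies in $\mathrm{Rem}(G)$ (a fact the iteration proof also yields via $f^{n}$ but never states), and it is robust to re-choosing the isomorphism arbitrarily at each step rather than requiring one coherent $f$; the price is invoking Proposition~\ref{union-rem} as the engine of an induction that the one-line iteration argument renders unnecessary.
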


\section{The Result}
In this section, we find a category of self-contained graphs for which the graph alternative conjecture of Bonato and Tardif \cite{Bonato1} is true. In order to do this, we need the following statement whose proof is straightforward.

\begin{proposition}
\label{twin-constrain}
A graph $G$ has a strong twin if and only if $G$ is a self-contained graph which has a non-empty induced subgraph $P$ such that $P \notin \mathrm{Rem}(G)$ but there is $H \in \mathrm{Rem}(G)$ such that $P \subset H$.
\end{proposition}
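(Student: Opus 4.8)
The plan is to prove the equivalence by two explicit constructions that translate directly between a strong twin of $G$ and the pair $(P,H)$ in the statement, using only the definitions of strong twins and of $\mathrm{Rem}(G)$. No machinery beyond set-theoretic bookkeeping of vertex sets should be required.

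For the forward implication, I would start from a strong twin $K$ of $G$ and fix isomorphisms witnessing mutual embeddability: an isomorphism $\phi$ from $G$ onto a proper induced subgraph of $K$, and an isomorphism $\psi$ from $K$ onto a proper induced subgraph of $G$. The composite $\psi\phi\colon G\to G$ is then an isomorphism onto a proper induced subgraph, which already shows $G$ is self-contained; more usefully it produces a strict chain of induced subgraphs $\psi\phi(G)\subsetneq\psi(K)\subsetneq G$. I would then set $H=G\setminus\psi\phi(G)$ and $P=G\setminus\psi(K)$, both complements taken inside $G$. Since $\psi\phi$ is an isomorphism of $G$ onto $G\setminus H$, we get $H\in\mathrm{Rem}(G)$; the strict chain yields a non-empty $P$ with $P\subset H$; and since $G\setminus P=G[V(\psi(K))]\cong K\not\cong G$, we conclude $P\notin\mathrm{Rem}(G)$, which is exactly the required property.

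For the reverse implication, I would simply take $K=G\setminus P$. As $P$ is non-empty, $K$ is a proper induced subgraph of $G$, so $K$ embeds properly in $G$ for free; and $P\notin\mathrm{Rem}(G)$ gives $K=G\setminus P\not\cong G$. It remains to embed $G$ as a proper induced subgraph of $K$: here I use that $H\in\mathrm{Rem}(G)$ makes $G\setminus H\cong G$, while $V(P)\subseteq V(H)$ forces $V(G)\setminus V(H)\subseteq V(K)$, so the copy $G\setminus H$ of $G$ in fact lies inside $K$. Declaring it \emph{proper} is where the hypothesis $P\subset H$ is essential: because $P\notin\mathrm{Rem}(G)$ but $H\in\mathrm{Rem}(G)$ we have $P\ne H$, hence $V(P)\subsetneq V(H)$, hence $V(G)\setminus V(H)\subsetneq V(K)$, so $G\cong G\setminus H$ is a proper induced subgraph of $K$. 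Thus $K$ and $G$ are strong twins.

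The routine part is the bookkeeping with complements; the only step demanding care, and the place I expect any slip to occur, is ensuring every embedded copy is a proper induced subgraph rather than merely an induced one. In both directions this properness reduces to the single observation that $P$ and $H$ cannot coincide, since one belongs to $\mathrm{Rem}(G)$ and the other does not. I would therefore isolate that fact at the outset and invoke it in each direction.
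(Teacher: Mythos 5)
Your proof is correct, and since the paper itself omits the argument (declaring it ``straightforward''), your two explicit constructions --- taking $H = G\setminus\psi\phi(G)$ and $P = G\setminus\psi(K)$ in the forward direction, and $K = G\setminus P$ with the copy $G\setminus H \subset K$ in the reverse --- are exactly the direct verification the paper has in mind. Your isolation of the observation that $P \neq H$ (one is removable, the other is not) as the source of properness of the embeddings is precisely the right bookkeeping point.
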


We say $G$ has a \emph{strong twin trough $H$} if $H \in \mathrm{Rem}(G)$ and there is non-empty $P \subset H$ such that $P \notin \mathrm{Rem}(G)$.

\begin{lemma}
\label{star-removable}
Let $\{ H_{0}, H_{1}, H_{2}, \ldots \}$ be a family of mutually vertex-disjoint induced subgraphs of a graph $G$ and for each $i= 1, 2, \ldots$, there exists $\alpha_{i} \in \mathrm{Aut}(G)$ such that $\alpha_{i}(H_{0})=H_{i}$, $\alpha_{i}(H_{i})=H_{0}$ and $\alpha_{i}(v)=v$ for every other vertices of $G$. Then $G$ is a self-contained graph and $H_{0} \in \mathrm{Rem}(G)$.
\end{lemma}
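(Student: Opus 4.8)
The plan is to construct an explicit ``Hilbert\nobreakdash-hotel shift'' isomorphism from $G$ onto the induced subgraph $G \setminus H_0$; this single map will yield both conclusions at once, since $H_0$ is a non-empty proper subgraph (its complement contains $H_1$), so $G \setminus H_0 \cong G$ gives simultaneously that $H_0 \in \mathrm{Rem}(G)$ and that $G$ is self-contained.

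First I would fix notation. Let $W = V(G) \setminus \bigcup_{i \ge 0} V(H_i)$, and for $i \ge 1$ set $\phi_i := \alpha_i|_{H_0} \colon H_0 \to H_i$, a bijection because $\alpha_i$ carries $H_0$ onto $H_i$; also put $\phi_0 := \mathrm{id}_{H_0}$. Each vertex of $H_i$ is then uniquely $\phi_i(u)$ with $u \in V(H_0)$. I define $f \colon V(G) \to V(G) \setminus V(H_0)$ by $f(w) = w$ on $W$ and $f(\phi_i(u)) = \phi_{i+1}(u)$, and check by bookkeeping that $f$ is a bijection onto $W \cup \bigcup_{i \ge 1} V(H_i) = V(G) \setminus V(H_0)$. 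The whole content is then to verify that $f$ preserves adjacency, which I would split according to where the two endpoints sit.

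The easy cases fall out of the defining properties of the $\alpha_i$. For $w \in W$ and $\phi_i(u) \in H_i$, the automorphism $\alpha_i$ fixes $w$ and sends $u$ to $\phi_i(u)$, so $w \sim \phi_i(u) \iff w \sim u$; the same holds with $i+1$, hence $f$ respects such edges. For two vertices of a single copy, $\alpha_i$ being an isomorphism $H_0 \to H_i$ gives $\phi_i(u) \sim \phi_i(v) \iff u \sim v$, again uniformly in $i$.

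The hard part will be the cross\nobreakdash-copy case: $\phi_i(u) \in H_i$ and $\phi_j(v) \in H_j$ with $i \ne j$, where I must show $\phi_i(u) \sim \phi_j(v) \iff \phi_{i+1}(u) \sim \phi_{j+1}(v)$. The natural strategy, proving the cross adjacency equals one value $C(u,v)$ independent of the chosen distinct indices, runs into the main obstacle: nothing forces $\alpha_i$ to be an involution, so composing the $\alpha_i$ to relocate copies can twist a copy by the automorphism $\alpha_i^2|_{H_0}$ of $H_0$, and no single automorphism need realise a simultaneous shift of two copies. I would sidestep this by never pulling a copy back to $H_0$ and only pushing $H_0$ forward: for $i \ge 1$ and $j \notin \{0, i\}$, the map $\alpha_i$ fixes $\phi_j(v)$ and sends $u \mapsto \phi_i(u)$, giving $\phi_i(u) \sim \phi_j(v) \iff u \sim \phi_j(v)$, and symmetrically $\iff \phi_i(u) \sim v$. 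Letting the free index range over the infinitely many spare copies collapses every cross adjacency (including those with an index $0$) to the single quantity $C(u,v) := [\,u \sim \phi_j(v)\,]$. Since $i \ne j$ forces $i+1 \ne j+1$, both sides of the required equivalence equal $C(u,v)$, completing the last case and hence showing that $f$ is an isomorphism and that $H_0 \in \mathrm{Rem}(G)$.
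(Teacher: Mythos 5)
Your proof is correct, and at the top level it is the same construction as the paper's: a Hilbert-hotel shift $f$ that fixes every vertex outside $\bigcup_{i\ge 0} H_i$, sends each $H_i$ onto $H_{i+1}$, and thereby witnesses both $H_0 \in \mathrm{Rem}(G)$ and self-containment at once. The differences are in the execution, and they are worth recording. First, the maps are not literally the same: the paper's shift acts on $H_i$ as $\alpha_{i+1} \circ \alpha_i$ (with $\alpha_0 = \mathrm{id}_G$), so each copy is pulled back to $H_0$ by the automorphism itself and is twisted en route by $\alpha_i^2|_{H_0}$, which need not be the identity --- precisely the non-involution subtlety you flag; your map $\phi_{i+1} \circ \phi_i^{-1}$ strips that twist out. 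Both maps work (the twist is harmless because, as your own invariance argument shows, adjacency to the outside and across distinct copies is insensitive to $\alpha_i^2|_{H_0}$), but the paper never remarks on this point. Second, the hard cross-copy case is verified differently: the paper first manufactures swap automorphisms $\beta_{i,j}$ and then chains $\alpha_i$, $\beta_{j+1,j}$, $\alpha_{i+1}$ along an edge $x \sim y$; as written, the first link of that chain ($y$ adjacent to $\alpha_i(x)$) fails verbatim when $y \in H_0$, since $\alpha_i$ does not fix $y$ there, and that case must be repaired by exchanging the roles of $x$ and $y$. Your spare-index argument --- collapsing every cross adjacency to the single invariant $C(u,v)$ by letting a third copy, fixed by the relevant automorphisms, mediate --- needs no auxiliary automorphisms and treats the index-$0$ cases uniformly, so for this lemma it is the tighter verification. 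What the paper's route buys instead is the family $\beta_{i,j}$ itself, which it reuses later (notably in the proof of Lemma \ref{star-twins}); your route buys a cleaner, more self-contained case analysis.
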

\begin{proof}
We first note that for each $i,j=1,2, \ldots$, we have $\alpha_{i} \circ \alpha_{j} (H_{j}) = \alpha_{i}(H_{0})=H_{i}$. So, the following function is an automorphism of $G$:
$$\beta_{i,j}(v)= \left\{ \begin{array}{l l} 
\alpha_{i} \circ \alpha_{j} (v) & v \in H_{j}\\
\alpha_{j} \circ \alpha_{i} (v) & v \in H_{i} \\
v & v \notin H_{i} \cup H_{j}.
\end{array}\right.$$
Now put $\alpha_{0}=\mathrm{id}_{G}$ and define $f:G\longrightarrow G \setminus H_{0}$ with 
$$f(v)= \left\{ \begin{array}{l l}
\alpha_{i+1} \circ \alpha_{i} (v) & v \in H_{i}, i=0,1,2, \ldots\\
v & v \notin \cup_{i=0}^{\infty} H_{i}.
\end{array}\right.$$
We show that $f$ is an isomorphism between $G$ and $G \setminus H_{0}$ to deduce that $G$ is a self-contained graph and $H_{0} \in \mathrm{Rem}(G)$.

It is clear that $f$ is well-defined and one-to-one. To show that $f$ is onto, let $x$ be a vertex of $G \setminus H_{0}$. Then either $x \notin \cup_{i=1}^{\infty} H_{i}$ which means that $x=f(x)$ or there is a unique $i=1,2, \ldots$ that $x \in H_{i}$, for which we have $x=f\big( \beta_{i-1,i}(x) \big)$.

It remains to show that $f$ is adjacency preserving. Let $x \sim y$. Then there is three possibilities:
\begin{itemize}
\item[i.] $x,y \notin \cup_{i=0}^{\infty} H_{i}$. Then $f(x)=x \sim y = f(y)$.
\item[ii.] $x \in \cup_{i=0}^{\infty} H_{i}$ but $y \notin \cup_{i=0}^{\infty} H_{i}$, or vice versa. Then there is a unique $i=0,1,2,\ldots$ such that $x \in H_{i}$, and hence $f(x)=\alpha_{i+1} \circ \alpha_{i} (x)$. Since both $\alpha_i$ and $\alpha_{i+1}$ are automorphisms of $G$, we must have $y$ is adjacent to $f(x)$.
\item[iii.] $x,y \in \cup_{i=0}^{\infty} H_{i}$. So there are unique $i,j=0,1,2,\ldots$ that $x \in H_{i}$ and $y \in H_{j}$. If $i=j$ then $\alpha_{i+1} \circ \alpha_{i} (x)$ is adjacent to $\alpha_{i+1} \circ \alpha_{i} (y)$ i.e., $f(x) \sim f(y)$. If $i \neq j$ then $y$ is adjacent to $\alpha_{i}(x)$ which is adjacent to $\beta_{j+1,j}(y)=f(y)$ which must also be adjacent to $\alpha_{i+1} \big( \alpha_{i}(x) \big)=f(x)$.
\end{itemize}
Showing that $f$ preserves non-adjacencies is similar and completes the proof.
\end{proof}

Let $G$ be a self-contained graph and $H \in \mathrm{Rem}(G)$. We say $H$ is a \emph{well-mannered removable subgraph of $G$} if for each isomorphism $f \in \mathrm{Iso}_{G}(H)$ there exists an automorphism $\alpha \in \mathrm{Aut}(G)$ such that $f(H)=\alpha(H)$, $\alpha^{2}(H)=H$ and $\alpha(v)=v$ for all $v \notin H \cup f(H)$. In this case, we also say $a$ is an \emph{alternating automorphism for $H$ and $f(H)$} or more conveniently, $a$ is an \emph{alternating automorphism for $f$}. Moreover, we may sometimes say that $f(H)$ is \emph{an alternating copy of $H$ in $G$}. Furthermore, we say $G$ is \emph{star-like} if all of its removable subgraphs are well-mannered.

Let us consider some useful properties of well-mannered removable subgraphs. When $G$ is a self-contained graph, $H \in \mathrm{Rem}(G)$, $f \in \mathrm{Iso}_{G}(H)$ and $\alpha \in \mathrm{Aut}(G \setminus H)$, we are able to add a copy of $H$ to $\alpha(G \setminus H)$ and obtain an isomorphic copy of $G$. In this case we say that $H$ is \emph{sewed} to $\alpha(G \setminus H)$ and $f^{-1} \circ \alpha^{-1}$ is an isomorphism between $\alpha(G \setminus H)$ and $G$. In particular, when $H$ is well-mannered, by iteratively removing and sewing copies of $H$, it can be shown that there is an infinite family $\mathcal{A}$ of mutually vertex-disjoint copies of $H$ that the formation of Lemma \ref{star-removable} holds for $G$ and $H$ and for each countable subfamily $\mathcal{R}$ of $\mathcal{A}$ containing $H$. Therefore, there is an \emph{standard isomorphism} $g \in \mathrm{Iso}_{G}(H)$ like what is introduced in the proof of Lemma \ref{star-removable} that only moves $\mathcal{R}_g=\{ H=H_{0}, H_{1}, H_{2}, \ldots \}$. Moreover, the following proposition states one of the most important properties of well-mannered removable subgraphs:

\begin{proposition}
Let $G$ be a self-contained graph and $H$ be a well-mannered removable subgraph of $G$. Then for each isomorphism $f \in \mathrm{Iso}_{G}(H)$ we have $f(H) \in \mathrm{Rem}(G)$ and there exists isomorphism $g \in \mathrm{Iso}_{G}(f(H))$ such that $g(f(H))=H$.
\end{proposition}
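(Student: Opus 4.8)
The plan is to argue directly from the defining properties of the alternating automorphism rather than from the sewing machinery. First I would fix an isomorphism $f \in \mathrm{Iso}_{G}(H)$ and invoke the well-manneredness of $H$ to obtain an automorphism $\alpha \in \mathrm{Aut}(G)$ with $\alpha(H)=f(H)$, $\alpha^{2}(H)=H$, and $\alpha(v)=v$ for every $v \notin H \cup f(H)$. From $\alpha(H)=f(H)$ together with $\alpha^{2}(H)=H$ I immediately get $\alpha(f(H))=\alpha^{2}(H)=H$; thus $\alpha$ interchanges the vertex sets $V(H)$ and $V(f(H))$ while fixing all remaining vertices. Here I would also record that $V(H)$ and $V(f(H))$ are disjoint, which holds because $f$ maps $G$ onto the induced subgraph $G \setminus H$, so $V(f(H)) \subseteq V(G) \setminus V(H)$.

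The key observation is that $\alpha$ restricts to an isomorphism from $G \setminus H$ onto $G \setminus f(H)$. Writing $V(G)=V(H) \sqcup V(f(H)) \sqcup R$, where $R$ is the set of vertices fixed by $\alpha$, we have $V(G) \setminus V(H)=V(f(H)) \sqcup R$, and $\alpha$ carries this set onto $V(H) \sqcup R = V(G) \setminus V(f(H))$. Since $\alpha$ is an automorphism of $G$, its restriction both preserves and reflects adjacency on these induced subgraphs, so $G \setminus H \cong G \setminus f(H)$. Combining this with $G \cong G \setminus H$ (witnessed by $f$) yields $G \setminus f(H) \cong G$, and since $f(H)$ is a non-empty proper subgraph, this gives $f(H) \in \mathrm{Rem}(G)$.

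Finally, to produce the required $g \in \mathrm{Iso}_{G}(f(H))$ with $g(f(H))=H$, I would set $g = \alpha \circ f \circ \alpha$, read as the composition $G \xrightarrow{\alpha} G \xrightarrow{f} G \setminus H \xrightarrow{\alpha} G \setminus f(H)$, where the last arrow is the restriction established above. As a composite of isomorphisms, $g$ is an isomorphism from $G$ onto $G \setminus f(H)$, hence $g \in \mathrm{Iso}_{G}(f(H))$. Tracking the vertex set of $f(H)$ through $g$ gives $g(f(H)) = \alpha(f(\alpha(f(H)))) = \alpha(f(H)) = H$: the inner $\alpha$ sends $V(f(H))$ to $V(H)$, then $f$ sends $V(H)$ back to $V(f(H))$, and the outer $\alpha$ sends $V(f(H))$ to $V(H)$, so $g(f(H))=H$ as desired.

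I do not anticipate a genuine obstacle here; the statement is essentially a bookkeeping consequence of the two defining properties of the alternating automorphism. The only points demanding care are the middle step --- verifying that $\alpha$ really carries $G \setminus H$ onto $G \setminus f(H)$, which relies on the disjointness of $V(H)$ and $V(f(H))$ and on $\alpha$ fixing everything outside $H \cup f(H)$ --- and checking that the domains and codomains in the composite $g = \alpha \circ f \circ \alpha$ line up correctly.
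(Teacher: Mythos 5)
Your proof is correct and follows essentially the same route as the paper: the paper's own argument consists precisely of taking the alternating automorphism $\alpha$ and setting $g = \alpha \circ f \circ \alpha$. You have simply supplied the bookkeeping the paper leaves implicit (disjointness of $V(H)$ and $V(f(H))$, and that $\alpha$ restricts to an isomorphism $G \setminus H \to G \setminus f(H)$), which is a worthwhile elaboration but not a different proof.
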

\begin{proof}
Let $\alpha \in \mathrm{Aut}_{G}(H)$ such that $f(H)=\alpha(H)$, $\alpha^{2}(H)=H$ and $\alpha(v)=v$ for all $v \notin H \cup f(H)$. Then, $g=\alpha \circ f \circ \alpha$ is an isomorphism between $G$ and $G \setminus f(H)$ such that $g(f(H))=H$.
\end{proof}

Let $G$ be a self-contained graph and $H \in \mathrm{Rem}(G)$. A vertex $v$ of $G$ is called a \emph{twisted vertex} for $H$ if there exists $P \in \mathrm{Rem}(G)$ such that $v \in V(P)$ but $v \notin V(Q)$ for all $Q \in \mathrm{Rem}(G\setminus H)$. The subgraph induced by all twisted vertices for $H$ is called the \emph{torsion} of $H$ and is denoted by $\mathrm{Tor}_{G}(H)$.  Meanwhile, when $\mathrm{Tor}_{G} (H) = \emptyset$ we say $H$ is a \emph{torsion-free removable subgraph} of $G$. For some examples and implications of torsion subgraphs, see~\cite{Shekarriz}. Here, we show that every well-mannered removable subgraph is torsion-free:

\begin{theorem}
Let $G$ be a self-contained graph and $H$ be a well-mannered removable subgraph of $G$. Then $\mathrm{Tor}_{G}(H)=\emptyset$.
\end{theorem}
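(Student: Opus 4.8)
The plan is to show directly that no vertex of $G\setminus H$ can be twisted for $H$. Since every $Q\in\mathrm{Rem}(G\setminus H)$ is an induced subgraph of $G\setminus H$, any witness for ``non-twistedness'' lives in $V(G\setminus H)$, and the torsion is accordingly regarded as an induced subgraph of $G\setminus H$; hence it suffices to prove that every $v\in V(G\setminus H)$ which lies in some removable subgraph of $G$ also lies in some removable subgraph of $G\setminus H$. The one fact I would isolate first is that isomorphisms preserve removability: if $\phi:A\longrightarrow B$ is an isomorphism and $R\in\mathrm{Rem}(A)$, then $B\setminus\phi(R)=\phi(A\setminus R)\cong A\setminus R\cong A\cong B$, so $\phi(R)\in\mathrm{Rem}(B)$. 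In particular any $g\in\mathrm{Iso}_{G}(H)$ carries removable subgraphs of $G$ to removable subgraphs of $G\setminus H$.

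Since $H$ is well-mannered, I would fix a standard isomorphism $g\in\mathrm{Iso}_{G}(H)$ as in the discussion following Lemma~\ref{star-removable}, together with its family of mutually vertex-disjoint copies $H=H_{0},H_{1},H_{2},\ldots$ and the automorphisms $\alpha_{i}\in\mathrm{Aut}(G)$ swapping $H_{0}$ with $H_{i}$ and fixing every other vertex; recall that $g$ fixes every vertex outside $\bigcup_{i\ge 0}H_{i}$. Now take any $v\in V(G\setminus H)$ lying in some $P\in\mathrm{Rem}(G)$, and split into two cases. If $v\notin\bigcup_{i\ge 0}H_{i}$, then $g(v)=v$; since $g$ carries $\mathrm{Rem}(G)$ into $\mathrm{Rem}(G\setminus H)$ we obtain $g(P)\in\mathrm{Rem}(G\setminus H)$ with $v=g(v)\in V(g(P))$, so $v$ lies in a removable subgraph of $G\setminus H$ and is not twisted.

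The remaining case is $v\in H_{i}$ for some $i\ge 1$, and here I would prove the stronger statement that $H_{i}\in\mathrm{Rem}(G\setminus H)$, which makes $v$ non-twisted regardless of $P$. For this I would reuse the alternating automorphisms $\beta_{i,j}$ built inside the proof of Lemma~\ref{star-removable}: each $\beta_{i,j}$ swaps $H_{i}$ and $H_{j}$ and fixes every vertex outside $H_{i}\cup H_{j}$. When $i,j\ge 1$ it fixes $H_{0}=H$ pointwise, hence restricts to an automorphism of $G\setminus H$ that swaps $H_{i},H_{j}$ and fixes all other vertices of $G\setminus H$. Applying Lemma~\ref{star-removable} to the graph $G\setminus H$ with the family $\{H_{i}\}\cup\{H_{j}:j\ge 1,\,j\ne i\}$ (taking $H_{i}$ as the base member) and these restricted automorphisms yields $H_{i}\in\mathrm{Rem}(G\setminus H)$, so every vertex of $H_{i}$, in particular $v$, lies in a removable subgraph of $G\setminus H$.

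Combining the two cases, no $v\in V(G\setminus H)$ satisfies the defining condition of a twisted vertex, whence $\mathrm{Tor}_{G}(H)=\emptyset$. The main obstacle I anticipate is the second case: one must check carefully that the $\beta_{i,j}$ genuinely restrict to automorphisms of $G\setminus H$ (which hinges on $i,j\ge 1$ so that $H_{0}$ is fixed pointwise) and that the re-indexed family together with these restrictions meets every hypothesis of Lemma~\ref{star-removable}. The first case, by contrast, is essentially immediate once the removability-under-isomorphism observation is recorded.
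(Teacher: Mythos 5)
Your proof is correct, but it is not the argument the paper gives, and the comparison is instructive. The paper argues by contradiction at the level of the whole torsion set: it takes a standard isomorphism $f\in\mathrm{Iso}_{G}(H)$, invokes isomorphism-invariance of torsion to write $f\bigl(\mathrm{Tor}_{G}(H)\bigr)=\mathrm{Tor}_{G\setminus H}\bigl(f(H)\bigr)$, asserts that this equals $\mathrm{Tor}_{G}(H)$ ``because $f$ fixes vertices outside $\mathcal{R}_f$,'' and then observes that a twisted vertex for $H$ lies, by definition, in no removable subgraph of $G\setminus H$, so it cannot be twisted for $f(H)$ in $G\setminus H$ --- contradiction. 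You instead argue vertex by vertex: for each $v\in V(G\setminus H)$ lying in some $P\in\mathrm{Rem}(G)$ you exhibit a concrete removable subgraph of $G\setminus H$ containing $v$, namely $g(P)$ when $v\notin\bigcup_{i}H_{i}$ (via your removability-under-isomorphism observation) and $H_{i}$ itself when $v\in H_{i}$ with $i\ge 1$. Your route is more elementary and explicit, and it actually supplies a step the paper compresses: the paper's equality $\mathrm{Tor}_{G\setminus H}\bigl(f(H)\bigr)=\mathrm{Tor}_{G}(H)$ follows from the fixing property only if one already knows that no twisted vertex lies in $\bigcup_{i\ge 1}H_{i}$, and that is exactly what your Case B establishes; the paper never justifies it. Two smaller remarks: (i) your Case B can be shortened to one line, since $H_{i-1}\in\mathrm{Rem}(G)$ (it is $\alpha_{i-1}(H)$, or $H$ itself when $i=1$) and $g(H_{i-1})=H_{i}$, so your own preservation fact yields $H_{i}\in\mathrm{Rem}(G\setminus H)$ without re-invoking Lemma \ref{star-removable}; (ii) your reading of the definition of twisted vertex as ranging over $V(G\setminus H)$ is the right one --- under the literal definition every vertex of $H$ would be vacuously twisted and the theorem false --- and the paper makes the same tacit assumption when it calls a twisted vertex ``an asset vertex to $G\setminus H$,'' so this is a shared interpretive necessity rather than a gap in your argument.
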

\begin{proof}
Let $f \in \mathrm{Iso}_{G}(H)$ be a standard isomorphism, then $$f\big( \mathrm{Tor}_{G}(H) \big) =\mathrm{Tor}_{G \setminus H}\big( f(H) \big)= \mathrm{Tor}_{G}(H)$$ because $f$ fixes vertices outside $\mathcal{R}_f$. By the way, if $v \in \mathrm{Tor}_{G}(H)$, it is an asset vertex to $G \setminus H$ and cannot belong to a removable subgraph in $G \setminus H$, i.e., $v \notin \mathrm{Tor}_{G \setminus H}\big( f(H) \big)$. So, there is no such a $v$ and we must have $\mathrm{Tor}_{G}(H)=\emptyset$.
\end{proof}

Let $G$ be a star-like self-contained graph which has a strong twin trough $H$, $f \in \mathrm{Iso}_{G}(H)$, $\alpha_{f} \in \mathrm{Aut}(G)$ be the alternating automorphism of $f$, and, $\beta_{i,j} \in \mathrm{Aut}(G)$ be the automorphism that alternates $f^{i}(H)$ and $f^{j}(H)$ and fixes other vertices. So, by Proposition \ref{twin-constrain}, there is a non-empty $P \subset H$ such that $P \notin \mathrm{Rem}(G)$ and $G_{1}=G \setminus P$ is a twin of $G=G_{0}$. By Proposition \ref{union-rem}, it is also clear that $Q= H \setminus P$ is not a removable subgraph of $G_{1}$. 

By the way, for $i=2, 3, \ldots$, put $G_{i} = G \setminus \bigcup_{j=1}^{i} f^{j-1}(P)$. The restriction of $\alpha_f$ to $G_{i}$, namely $\overline{\alpha_f}$, is an automorphism of $G_i$ such that $\overline{\alpha_f}(Q)=f(Q)$, $\overline{\alpha_f}^{2}(Q)=Q$ and $\overline{\alpha_f}(v)=v$ for all $v \in G_{i} \setminus \big( Q \cup f(Q)\big)$.

In the following Lemma, we show that $G_{2}, G_{3}, \ldots$ are all strong twins for $G$.

\begin{lemma}
\label{star-twins}
Let $G, G_{1}, G_{2}, \ldots$ be the above described graphs. Then $G_{2}, G_{3}, \ldots$ are all strong twins for $G$.
\end{lemma}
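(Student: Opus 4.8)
The plan is to verify the three defining conditions of a strong twin for each pair $(G,G_i)$ with $i\ge 2$: that $G_i$ is isomorphic to a proper induced subgraph of $G$, that $G$ is isomorphic to a proper induced subgraph of $G_i$, and that $G_i\not\cong G$. Writing $H_k=f^{k}(H)$, $P_k=f^{k}(P)$ and $Q_k=f^{k}(Q)$ with $Q=H\setminus P$, the first condition is immediate: $G_i=G\setminus\bigcup_{k=0}^{i-1}P_k$ is literally a proper induced subgraph of $G$, proper because $P\ne\emptyset$. For the second I would use that the standard isomorphism satisfies $f(H_k)=H_{k+1}$ and fixes every vertex outside $\bigcup_k H_k$, so iterating gives an isomorphism $f^{i}\colon G\longrightarrow G\setminus\bigcup_{k=0}^{i-1}H_k$. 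Since $P_k\subseteq H_k$, the image $G\setminus\bigcup_{k=0}^{i-1}H_k$ equals $G_i\setminus\bigcup_{k=0}^{i-1}Q_k$, which is an induced subgraph of $G_i$; it is proper because $Q\ne\emptyset$ (as $P\notin\mathrm{Rem}(G)$ forces $P\subsetneq H$). Thus $f^{i}$ realizes $G$ as a proper induced subgraph of $G_i$, and the two embeddings are settled.

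The heart of the argument is a set of structural identities. First, inside $G_m$ the untouched copies $\{H_m,H_{m+1},\dots\}$ still carry the alternating automorphisms: each $\beta_{m,m+t}$ fixes $\bigcup_{k<m}P_k$ pointwise, hence restricts to an automorphism $\overline{\beta_{m,m+t}}$ of $G_m$ that swaps $H_m$ with $H_{m+t}$ and fixes the reduced copies. Applying Lemma \ref{star-removable} to this family then yields $H_m\in\mathrm{Rem}(G_m)$, so $G_m\setminus H_m\cong G_m$. Because $G_m\setminus H_m=G_{m+1}\setminus Q_m$, this gives the key unconditional relation $G_{m+1}\setminus Q_m\cong G_m$; transporting it by $f^{-1}$ and by the restricted swaps (which permute the reduced copies $Q_k$ of $G_i$) upgrades it to the statement that deleting any $a$ of the reduced copies of $G_b$ produces a graph isomorphic to $G_{b-a}$, and in particular deleting all of them returns $G\cong G_0$.

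The hard part will be proving $G_i\not\cong G$ for $i\ge 2$, knowing only the base fact $G_1\not\cong G$ (equivalently $P\notin\mathrm{Rem}(G)$). The tempting descent---assume $G_i\cong G$, so $\bigcup_{k<i}P_k\in\mathrm{Rem}(G)$, then peel off one $P_k$---does not go through naively, because Proposition \ref{union-rem} requires the piece being split off to be itself removable, whereas here neither a single $P_k$ nor a proper sub-union of the $P_k$ is removable: each such deletion lands on some $G_j$ with $1\le j<i$, none of which is known isomorphic to $G$. The same obstruction reappears as a stubborn ``defect union'', since under the assumption one can manufacture removable subgraphs of $G_i$ such as $Q_0\cup P_i$ neither of whose halves is removable. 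Hence the proof cannot be pure formal manipulation of $\mathrm{Rem}$, and this non-isomorphism is the genuine content of the lemma.

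To overcome this I would isolate an isomorphism invariant counting the reduced copies: via the standard family every graph in the list decomposes into the fixed part together with pendant copies each isomorphic to $H$ or to $Q$, with $G_i$ having exactly $i$ copies of type $Q$ and $G$ having none. The identities above show the isomorphism type depends only on this finite count, so what remains is to show that no isomorphism can alter it---that a copy reduced to $Q$ can never be ``healed'' into a full $H$. This is exactly where star-likeness is indispensable: well-manneredness, together with the torsion-freeness established earlier, rigidifies the placement of the copies enough that an isomorphism must respect the decomposition, whence the count $i$ is invariant and $G_i\not\cong G_0=G$. I expect establishing this invariance to be the crux of the entire proof; once it is in hand, combining it with the two embeddings of the first paragraph shows that each of $G_2,G_3,\dots$ is a strong twin of $G$.
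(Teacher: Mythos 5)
Your first paragraph (the two mutual embeddings) is fine, and it matches what the paper simply asserts in one line. But the lemma's entire content is the non-isomorphism $G_i\not\cong G$, and this you do not prove: you explicitly defer it (``I expect establishing this invariance to be the crux of the entire proof''), so the proposal is a plan, not a proof. Worse, the route you sketch is unworkable as stated. Your proposed invariant --- the number of reduced copies, i.e.\ copies carrying $Q$ rather than a full $H$ --- is \emph{not} an isomorphism invariant of these graphs: the paper's own Example \ref{extended-star} exhibits an ordinary star-like self-contained graph in which $G_1\cong G_2\cong G_3\cong\cdots$, even though $G_k$ has exactly $k$ reduced copies. In that example an isomorphism $G_1\to G_2$ happily merges one reduced copy onto two; nothing in well-manneredness or torsion-freeness forces isomorphisms to ``respect the decomposition.'' So the strong invariance claim you want is false in general, and the weaker fact actually needed (zero reduced copies versus $i\geq 2$ of them) is exactly what still requires an argument.

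The paper closes this gap by contradiction, with no counting at all. Assume $G_i\cong G$; since $G$ is star-like, every removable subgraph of $G_i$ must then be well-mannered. The restriction $f^{*}$ of $f$ to $G_i$ shows that $X=Q\cup f^{i}(P)$ --- a mixed union of a reduced piece and an untouched piece --- is removable in $G_i$, hence must admit an alternating automorphism $\gamma$. Composing $\gamma$ with the restriction $\overline{\alpha_f}$ (which alternates $Q$ and $f(Q)$) yields an automorphism $\xi$ of $G_i$ that alternates $f^{i}(P)$ and $f^{i+1}(P)$ and fixes everything else; lifting $\xi$ to $G$ and conjugating by the swaps $\beta_{0,i}$, $\beta_{j,i+1}$ produces, for every $j$, an automorphism of $G$ alternating $P$ and $f^{j}(P)$. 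Then $\{P,f(P),f^{2}(P),\ldots\}$ satisfies the hypothesis of Lemma \ref{star-removable}, so $P\in\mathrm{Rem}(G)$, contradicting the defining property $P\notin\mathrm{Rem}(G)$. Note how this argument only ever concludes something about $P$ itself, sidestepping any claim about how isomorphisms act on the family of copies --- which, as the example shows, is the only version that can survive.
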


\begin{proof}
Since $G, G_{1}, G_{2}, \ldots$ are mutually embeddable, we only show that they are all non-isomorphic to $G$. 

Suppose on contrary that there is an $i=2, 3 , \ldots$ such that $G \simeq G_{i}$. Therefore, $W=\bigcup_{j=1}^{i}f^{j-1}(P)$ is a well-mannered removable subgraph of $G$. On the other hand, $M=\bigcup_{j=1}^{i}f^{j-1}(Q)$ is also a well-mannered removable subgraph of $G_{i}$.  

Put $X=Q \cup f^{i}(P)$. Since the restriction of $f$ to $G_i$, namely $f^*$, is an isomorphism from $G_i$ to $G_{i} \setminus X$, we must have $X$ is a removable subgraph of $G_i$. We show that $X$ is not a well-mannered removable subgraph of $G_i$, contradicting the assumption $G \simeq G_{i}$.

If $X$ is a well-mannered removable subgraph of $G_i$, there must be an alternating automorphism $\gamma \in \mathrm{Aut}(G_{i})$ such that $\gamma(X)=f^{*}(X)$, $\gamma^{2}(X)=X$ and $\gamma$ fixes all other vertices of $G_i$. On the other hand, as noted above, the restriction of $\alpha_f$ to $G_i$, namely $\overline{\alpha_f}$, is an automorphism of $G_i$ which alternates $Q$ and $f(Q)$. Therefore, there is another automorphism $\xi=\overline{\alpha_f} \circ \gamma$ which alternates $f^{i}(P)$ and $f^{i+1}(P)$ and fixes all other vertices. The automorphism $\xi$ can be lifted to an automorphism $\overline{\xi}$ of $G$ which also fixes vertices of $W$. Now, $\beta_{0,i}\circ\beta_{j,i+1}\circ\overline{\xi}\circ\beta_{0,i}\circ\beta_{j,i+1}$ is an automorphism of $G$ that alternates $P$ and $f^{j}(P)$ and fixes other vertices, for $j=1,2, \ldots$. So, $\mathcal{A}= \{P, f(P), f^{2}(P), \ldots \}$ is an infinite family of mutually vertex-disjoint alternating copies of $P$ in $G$, and thus by Lemma \ref{star-removable}, $P$ is a removable subgraph of $G$, a contradiction.
\end{proof}

Since all $G_i$s are mutually embeddable, if we were able to prove that $G_{1}, G_{2}, \ldots$ are also mutually non-isomorphic, we had been arrived to a proof for graph alternative conjecture for all star-like self-contained graphs. Although it is quite tempting to try this in the general case, the following example shows that it is even possible that all $G_{2}, G_{3}, \ldots$ be isomorphic to $G_1$.

\begin{example}
\label{extended-star}
Let $G$ be a graph defined as follows: $V(G)=A_{1}\cup A_{2} \cup \{ o \}$ where $A_{i}=\{ a_{i,1}, a_{i,2}, \ldots \}$ for $i=1,2$. And, for edges of $G$ we have $a_{1,j}$ is adjacent to $a_{2,j}$ and $o$ for each $j \in \mathbb{N}$. Then, $G$ is a star-like self-contained graph. Let $\mathfrak{p}_n$ be the $n^{\mathrm{th}}$ prime number and $P=\{ a_{2,2^j} \vert j \in \mathbb{N} \}$. We then have $P \notin \mathrm{Rem}(G)$ but $P \subset H=\{ a_{i,2^j} \vert j \in \mathbb{N},i=1,2 \} \in \mathrm{Rem}(G)$. Now put $G_{1}= G \setminus P$ which can easily be recognized as a strong twin of $G$. Let $f:G \longrightarrow G \setminus H$  be the isomorphism that moves $a_{i,{\mathfrak{p}_{n}}^j}$ to $a_{i,{\mathfrak{p}_{n+1}}^j}$ for $j \in \mathbb{N}$ and $i=1,2$ and fixes all other vertices. Now if we construct $G_{2}, G_{3}, \ldots$ like what is said before Lemma \ref{star-twins}, we have $G_{k} \simeq G_{1}$ for all $k=2,3, \ldots$.
\end{example}

The obstacle we faced in Example \ref{extended-star} is that $Q=H \setminus P$ is a self-contained graph which has a removable subgraph isomorphic to itself! If we could guarantee that this case does not happen for a specific star-like self-contained graph $G$, we can proceed to prove the conjecture for $G$. In particular, if for each removable subgraph $H$ which contains a non-removable subset $P$, there exists non-empty $P^{'} \subset H$ such that $Q^{'}=H \setminus P^{'}$ is a finite graph, then the cases similar to Example \ref{extended-star} can be replaced by some well-behaved cases, and, we say that $G$ is an \emph{ordinary star-like self-contained graph}. Moreover, When $G$ has a strong twin, namely $G_1$ such that $G_1$ contain a finite graph $Q$ for which we have $G \simeq G_{1} \setminus Q$, we say $G_1$ is an \emph{ordinary strong twin} for $G$.

\begin{theorem}
\label{orderly-twin}
Let $G$ be an ordinary star-like self-contained graph. Then $G$ has infinitely many strong twins or none.
\end{theorem}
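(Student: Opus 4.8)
The plan is to prove the dichotomy by showing that as soon as $G$ has one strong twin it has an infinite family of pairwise non-isomorphic ones, namely the graphs $G_1, G_2, \ldots$ already constructed before Lemma \ref{star-twins}, but built from a $P$ whose complement in $H$ is finite. First I would dispose of the trivial branch: if $G$ has no strong twin the conclusion holds with ``none''. So assume $G$ has a strong twin; by Proposition \ref{twin-constrain} there is $H \in \mathrm{Rem}(G)$ containing a non-empty non-removable induced subgraph, i.e. $G$ has a strong twin through $H$.

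Second, I would use ordinariness to pass to an \emph{ordinary} strong twin. The ordinary hypothesis applied to $H$ furnishes a non-empty $P \subset H$ with $Q := H \setminus P$ finite; I must still arrange that $P$ is non-removable. If the $P$ so produced is removable, then by Proposition \ref{union-rem} the finite graph $Q$ is removable in $G \setminus P \cong G$, so $G$ possesses a finite removable subgraph, and I would repeat the analysis inside it: a finite removable subgraph either contains a non-removable subset — whose complement is automatically finite, yielding the desired ordinary strong twin — or has all of its non-empty subsets removable, a degenerate situation I would exclude using the strong-twin hypothesis together with the fact that a finite graph is never self-contained. I expect this reduction to be routine bookkeeping rather than the main difficulty. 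Its outcome is a non-removable $P \subsetneq H$ with $Q$ finite and non-empty, so that $G_1 := G \setminus P$ is an ordinary strong twin and $G \cong G_1 \setminus Q$.

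Third, fix a standard isomorphism $f \in \mathrm{Iso}_G(H)$, giving mutually disjoint blocks $H_0, H_1, \ldots$ with $H_k = P_k \cup Q_k$, $P_k = f^k(P)$, $Q_k = f^k(Q)$, and set $G_i = G \setminus (P_0 \cup \cdots \cup P_{i-1})$. By Lemma \ref{star-twins} every $G_i$ is a strong twin of $G$, and since $G \cong G_i \setminus (Q_0 \cup \cdots \cup Q_{i-1})$, each $G_i$ is obtained from $G$ by demoting its first $i$ blocks from full copies $H_k$ to the finite defective copies $Q_k$. It remains only to separate these into infinitely many isomorphism types. To this end I would attach to every strong twin $T$ of $G$ the invariant $\nu(T) = \min\{\,|V(R)| : R \subset T \text{ finite},\ T \setminus R \cong G\,\}$, the least number of vertices one must delete from $T$ to recover the isomorphism type of $G$; it is manifestly an isomorphism invariant. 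Deleting the $i$ defective blocks gives the easy bound $\nu(G_i) \le i\,|V(Q)|$, and the whole theorem reduces to the matching lower bound $\nu(G_i) \ge i\,|V(Q)|$, equivalently to the assertion that the number of defective blocks is an isomorphism invariant; granting it, $\nu(G_i) = i\,|V(Q)|$ with $|V(Q)| \ge 1$ forces the $G_i$ to be pairwise non-isomorphic.

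The lower bound is the heart of the argument and the step I expect to be the main obstacle, and it is exactly here that finiteness of $Q$ is indispensable. A deletion $R$ with $|V(R)| < i\,|V(Q)|$ could reach the type of $G$ only by leaving some defective block $Q_k$ intact and ``completing'' it to a full $H$-block using vertices elsewhere — this is precisely the absorption phenomenon of Example \ref{extended-star}, which is available only when $Q$ is self-contained. Since $Q$ is finite it is not self-contained, so no such completion exists. I would make this rigorous by contradiction, adapting the mechanism of Lemma \ref{star-twins}: from an economical deletion (or from a hypothetical isomorphism $G_i \cong G_j$ with $i < j$) I would produce, using the alternating automorphisms $\alpha_f$ and $\beta_{i,j}$, an automorphism of $G$ that transposes a single pair $P_k, P_{k+1}$ and fixes everything else, and then conjugate it by the block-transpositions $\beta_{0,m}$ to obtain, for every $m$, an automorphism of $G$ interchanging $P$ and $f^m(P)$ while fixing all other vertices. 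By Lemma \ref{star-removable} this exhibits $\{P, f(P), f^2(P), \ldots\}$ as an alternating family and forces $P \in \mathrm{Rem}(G)$, contradicting $P \notin \mathrm{Rem}(G)$. With the lower bound in hand, $\nu$ takes infinitely many values on $\{G_i\}$, completing the proof.
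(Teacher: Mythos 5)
Your proposal follows essentially the same route as the paper's own proof: you construct the identical family $G_1, G_2, \ldots$ from a non-removable $P \subset H$ with $Q = H\setminus P$ finite, invoke Lemma \ref{star-twins} for mutual embeddability and twin-ness, and establish pairwise non-isomorphism by the paper's contradiction mechanism --- a hypothetical isomorphism $G_i \simeq G_j$ produces, via star-likeness and the alternating automorphisms $\alpha_f$, $\beta_{i,j}$, a full alternating family $\{P, f(P), f^2(P), \ldots\}$, whence Lemma \ref{star-removable} forces $P \in \mathrm{Rem}(G)$, a contradiction. Your invariant $\nu$ and the bookkeeping ensuring the non-removable $P$ can be chosen with finite complement are just repackagings of steps the paper performs (or silently assumes in its definition of ordinary) directly, so the two arguments coincide in substance and rigor.
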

\begin{proof}
If $G$ does not have a strong twin, there is nothing to prove. So, suppose $G$ has a strong twin $G_{1}=G \setminus P$ trough a removable subgraph $H \in \mathrm{Rem}(G)$ and let $Q$, $f$, $G_{2}, G_{3}, \ldots$ be defined like those right before Lemma \ref{star-twins}, and, as above, we can assume that $Q$ is a finite graph. Since by Lemma \ref{star-twins}, $G_{1}, G_{2}, \ldots$ are all strong twins for $G$ and each pair of them contain mutual embedding, we only need to show that they are mutually non-isomorphic.

Suppose on the contrary that there are natural numbers $i$ and $j$ such that $i<j$ and $G_{i} \simeq G_{j}$. Then $M=\bigcup_{k=1}^{j-i}f^{k-1}(Q)$ is a finite removable graph to $G_j$. Let $g:G_{j} \longrightarrow G_{j} \setminus M$ be an isomorphism. Since in $G_i$ and $G_j$ there are $i$ and $j$ vertex disjoint alternating copies of $Q$, respectively, and because $i<j$ and $Q$ is a finite graph, it can be deduced that $g(Q)$ is outside alternating copies of $Q$ in $G_j$. Therefore, $g(Q)$, which is an induced subgraph of $G$, has $j$ alternating copies in $G$. Hence, if we put $Y=f^{j+1}(P)$, then we must have $X=g(Q) \cup g\big( Y \big)$ is a removable subgraph of $G$, there is isomorphism $\ell:G \longrightarrow G \setminus X$ such that $\ell(X)= g\big( f(Q) \big) \cup g\big( f(Y) \big)$. Now, with an argument similar to the proof of Lemma \ref{star-twins}, we must have $X$ is not well-mannered, contrary to the fact that $G$ is a star-like self-contained graph.
\end{proof}

Now it is time to prove a connected version of the graph alternative conjecture for ordinary star-like self-contained graphs.

\begin{theorem}
Let $G$ be a connected ordinary star-like self-contained graph which has a connected ordinary strong twin. Then $G$ has infinitely many connected strong twins.
\end{theorem}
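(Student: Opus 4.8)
The plan is to show that, among the strong twins produced for Theorem~\ref{orderly-twin}, every member of the sequence $G_{1},G_{2},\ldots$ is already connected. Since Theorem~\ref{orderly-twin} guarantees that $G_{1},G_{2},\ldots$ are pairwise non-isomorphic strong twins of $G$, and since we are given that $G$ and its connected ordinary strong twin $G_{1}$ are connected, it suffices to prove by induction that each $G_{i}$ is connected; the infinitely many connected strong twins are then $G_{1},G_{2},\ldots$ themselves. Throughout I would write $H_{k}=f^{k}(H)$, $P_{k}=f^{k}(P)$ and $Q_{k}=f^{k}(Q)$, so that $G_{i}=G\setminus\bigcup_{k=0}^{i-1}P_{k}$, while $Q_{0}=Q$ is the prescribed finite graph with $G_{1}\setminus Q_{0}\cong G$ (recall $G_{1}\setminus Q=G\setminus H\cong G$).

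First I would record the basic tower relation. Because $f$ is a standard isomorphism it shifts the copies, $f(H_{k})=H_{k+1}$ and $f(P_{k})=P_{k+1}$, and fixes every vertex outside $\bigcup_{k}H_{k}$; a direct computation then gives
$$f(G_{i})=G\setminus\bigl(H_{0}\cup P_{1}\cup\cdots\cup P_{i}\bigr)=G_{i+1}\setminus Q_{0}.$$
Hence $G_{i+1}\setminus Q_{0}\cong G_{i}$, so assuming inductively that $G_{i}$ is connected (the base cases being $G_{0}=G$ and the given $G_{1}$), the ``core'' $G_{i+1}\setminus Q_{0}$ is connected, and the whole problem reduces to showing that the finite set $Q_{0}$ attaches to this core inside $G_{i+1}$.

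The main obstacle, and the only place where connectivity can be lost, is precisely this attachment step, since passing from $G_{i}$ to $G_{i+1}=G_{i}\setminus P_{i}$ deletes a further copy $P_{i}$, and a priori the vertices of $Q_{0}$ might have relied on edges into $P_{1},\ldots,P_{i}$ to reach the rest of the graph. I would argue that $G_{i+1}$ has no connected component contained in the finite set $Q_{0}$. Suppose $K\subseteq Q_{0}$ were such a component. Every edge of $G$ leaving $H_{0}$ from a vertex of $K$ lands either in the fixed part $S=G\setminus\bigcup_{k}H_{k}$ or in some block $H_{k}$ with $k\geq 1$. An edge of the first kind survives in $G_{i+1}$, contradicting that $K$ is a component; and for an edge $u\sim w$ with $u\in K$ and $w\in H_{k}$, the block-swapping automorphism $\beta_{k,m}$ fixes $u$ and sends $w$ to a neighbour of $u$ in $H_{m}$ for every $m\geq 1$, so $u$ has a neighbour in some block $H_{m}$ with $m>i$, which is left intact in $G_{i+1}$ --- again contradicting that $K$ is a component. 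Hence the vertices of $K$ would have, in $G$, no edges leaving $H_{0}$ other than into $P_{0}$; but then $K$ would already be a finite component of the connected infinite graph $G_{1}=G\setminus P_{0}$, which is absurd. Therefore no such $K$ exists and $G_{i+1}$ is connected.

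Completing the induction then yields that all of $G_{1},G_{2},\ldots$ are connected, and together with their pairwise non-isomorphism from Theorem~\ref{orderly-twin} this exhibits infinitely many connected strong twins of $G$. I expect the delicate point to be the attachment argument of the third paragraph: one must rule out that a vertex of the surviving finite piece $Q_{0}$ is stranded by the deletion of the finitely many copies $P_{1},\ldots,P_{i}$, and it is exactly the symmetry supplied by the automorphisms $\beta_{k,m}$ that converts a single external edge into one in every block, so that infinitely many such edges survive any finite deletion.
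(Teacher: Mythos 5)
Your proof is correct, and it reaches the conclusion by a genuinely different route than the paper's. The paper replaces the shift by a modified standard isomorphism $f^{*}$ that fixes the block $H^{*}=f(H)$, so that $H^{*}$ survives intact in every $G_{i}$ and serves as an anchor; its induction hypothesis is that all of $G, G_{1},\ldots,G_{i-1}$ are connected, and it joins each copy ${f^{*}}^{j}(Q)$ to a vertex $v\in H^{*}$, handling the last copy ${f^{*}}^{i-1}(Q)$ by transporting already-built paths with the swap automorphism $\beta_{0(i-1)}$. You instead keep the plain shift and use the identity $f(G_{i})=G_{i+1}\setminus Q_{0}$, so the core of $G_{i+1}$ is connected by the hypothesis on $G_{i}$ alone, and the whole inductive step collapses to your attachment argument: a putative component $K\subseteq Q_{0}$ cannot have an edge into the fixed part (such an edge survives in $G_{i+1}$), cannot have an edge into any block $H_{k}$ with $k\geq 1$ (the swaps $\beta_{k,m}$ replicate it into every block, in particular into some untouched $H_{m}$ with $m>i$), and cannot lack edges towards $G\setminus H_{0}$ altogether, since then it would be a finite component of the connected infinite graph $G_{1}$. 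Both arguments run on the same standing machinery (standard isomorphisms and the swap automorphisms supplied by well-manneredness, plus the pairwise non-isomorphism from Theorem \ref{orderly-twin}), but yours buys a leaner induction hypothesis --- only the connectivity of $G_{i}$ and of the given twin $G_{1}$ --- and makes transparent the structural reason connectivity cannot be lost: one edge leaving $H_{0}$ forces such edges into infinitely many blocks, so no finite deletion can strand $Q_{0}$; the paper's anchored version, by contrast, produces explicit paths and stays closer in style to Lemma \ref{star-twins}. One small wording slip: ``no edges leaving $H_{0}$ other than into $P_{0}$'' should read ``no edges leaving $H_{0}$ at all,'' since edges into $P_{0}\subseteq H_{0}$ do not leave $H_{0}$; the argument itself is unaffected, because edges from $K$ into $Q_{0}\setminus K$ are already excluded by $K$ being a component of $G_{i+1}$.
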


\begin{proof}
Since $G$ is ordinary star-like and has a strong twin, by Theorem \ref{orderly-twin}, it has infinitely many twins like those constructed in the proof. So, with the terminology of the proof of Theorem \ref{orderly-twin} and Lemma \ref{star-twins} for $G_{i}$s, $H, P, Q, f$ and $a_{f}$, we inductively show that all $G_{i}$s are connected provided that $G$ and $G_{1}$ are both connected. To do this, we only replace $f:G \longrightarrow G \setminus H$ with the standard isomorphism $f^{*}:G \longrightarrow G \setminus H$ that only moves $H$ to $f^{2}(H)$, $f^{j}(H)$ to $f^{j+1}(H)$ for $j=2,3, \ldots$, and fixes $H^{*}=f(H)$ and all other vertices.

Suppose that $G, G_{1}, \ldots, G_{i-1}$ are all connected for $i=2,3, \ldots$. Then $$G_{i} = G_{i-1} \setminus {f^{*}}^{i-1}(P) = G \setminus \bigcup_{j=0}^{i-1} {f^{*}}^{j}(P) = G \setminus W.$$
But $G_{i} \setminus \bigcup_{j=0}^{i-1} {f^{*}}^{j}(Q) = G_{i} \setminus M$ is an isomorphic copy of $G$ in $G_{i}$ which contains $H^{*}$ and is connected. So, in $G_{i}$, every vertices $v \in V(H^{*})$ has a path to all other vertices of  $G_{i} \setminus M$, and, since $G_{1}, \ldots, G_{i-1}$ are all connected and ${f^{*}}^{i-1}(H)$ is a removable subgraph to all these self-contained graphs, every vertices of $\bigcup_{j=0}^{i-2} {f^{*}}^{j}(Q)$ has a path to $v$ which does not meet $\bigcup_{j=0}^{i-1}{f^{*}}^{j}(H)$. Therefore, it is only needed to show that every vertices of ${f^{*}}^{i-1}(Q)$ has a path to $v$ that does not meat ${f^{*}}^{i-1}(P)$.

Let $\beta_{0(i-1)} \in \mathrm{Aut}(G)$ be the automorphism that $$\beta_{0(i-1)}(H)={f^{*}}^{i-1}(H), \beta_{0(i-1)}\big( {f^{*}}^{i-1}(H)\big) =H$$ and fixes all other vertices of $G$. Then the restriction of $\beta_{0(i-1)}$ to $G_{i}$ is an automorphism of $G_{i}$ that alternates $Q$ with ${f^{*}}^{i-1}(Q)$. Consequently, the desired paths are images of already assumed paths from vertices of $Q$ to $v$.
\end{proof}

The reader should note that although they have some overlaps, there are infinitely many ordinary star-like self-contained graphs which are neither rayless nor rooted trees. For instance, let $G$ be a graph consisting of countably many disjoint copies of $K_{\aleph_0}$, i. e., $\{ K_{\aleph_0}^{i} : i \in \mathbb{N} \}$ along with a single vertex $o$, and, $g_{i}:\mathbb{N}\longrightarrow K_{\aleph_0}^{i}$ be fixed enumerations. Let also $o$ be adjacent to each vertex of $K_{\aleph_0}^{1}$ and every vertex $v$ of $K_{\aleph_0}^{i}$ be adjacent to $u$ of $K_{\aleph_0}^{i+1}$ if $g_{i}^{-1}(v)= g_{i+1}^{-1}(u)$. The graph $G$ is then an ordinary star-like self-contained graph for which $H_{1}=\{ g_{j}(1) : j \in \mathbb{N} \}$ is a removable subgraph and $f:G \longrightarrow G\setminus H$ defined by $$f(v)=\left\{\begin{matrix}
o & v=o\\
g_{j}\big( g_{j}^{-1}(v)+1 \big) & v \in K_{\aleph_0}^{j}
\end{matrix} \right. $$ is an isomorphism. Now, let $Q=\{ g_{1}(1) \}$ and $P= H \setminus Q$. Then $G_{1} =G \setminus P$ is a strong twin for $G$. Consequently, $G_{2}, G_{3}, \ldots$, which were constructed prior to Lemma \ref{star-twins}, are different classes of twins for $G$. However, $G$ is neither rayless, nor a rooted tree.



\begin{thebibliography}{99}
\bibitem{rayless} Bonato, A., Bruhn, H., Diestel, R., Spr\"{u}ssel, P., \emph{Twins of rayless graphs}, Journal of Combinatorial Theory, Series B, Volume 101, 2011, pp 60--65.

\bibitem{Bonato1} Bonato, A., Tardif, C., \emph{Large Families of Mutually Embeddable Vertex-Transitive Graphs}, J. Graph Theory, Volume 43, 2003, pp 99--106.

\bibitem{Bonato2} Bonato, A., Tardif, C., \emph{Mutually embeddable graphs and the tree alternative conjecture}, Journal of Combinatorial Theory, Series B, Volume 96, 2006, pp 874--880.

\bibitem{Diestel} Diestel, R., \emph{Graph Theory}, 4th edition, Springer, 2010.

\bibitem{Shekarriz} Shekarriz, M. H., Mirzavaziri, M., \emph{Self-contained graphs}, preprint available on arXiv: 1503.00139.

\bibitem{Tyomkyn} Tyomkyn, M., \emph{A proof of the rooted tree alternative tree conjecture}, Discrete Math., Volume 309, 2009, pp 5963--5967.
\end{thebibliography}
\end{document}